\documentclass[english,12pt]{amsart}

\textwidth=16.5cm
\textheight=23.5cm
\parindent=16pt
\oddsidemargin=-0.5cm \evensidemargin=-0.5cm \topmargin=-0.5cm
\frenchspacing
\usepackage{amssymb,latexsym}
\usepackage{enumerate}
\usepackage[english]{babel}

\makeatletter
\@namedef{subjclassname@2010}{%
  \textup{2010} Mathematics Subject Classification}
\makeatother

\DeclareMathOperator{\dist}{\dist}

\DeclareMathOperator{\res}{\upharpoonright}

\newtheorem{theorem}{Theorem}
\newtheorem{lemma}[theorem]{Lemma}

\newtheorem{corollary}[theorem]{Corollary}


\hyphenation{embedding}
\hyphenation{Is-ti-tu-to}
\hyphenation{Na-zio-na-le}

\title{Norming subspaces of Banach spaces}
\dedicatory{Dedicated to the memory of Professor Joseph Diestel}

\author{V. P. Fonf}
\address{Department of Mathematics, Ben-Gurion University of the Negev, 84105 Beer-Sheva, Israel}
\email{fonf@math.bgu.ac.il}
\author{S. Lajara}
\address{Departamento de Matem\'aticas\\ Universidad de Castilla la Mancha\\ Escuela de Ingenieros Industriales, 02071 Albacete, Spain}
\email{sebastian.lajara@uclm.es}
\author{S. Troyanski}
\address{Departamento de Matem\'aticas, Universidad de Murcia, Campus de Espinardo, 30100 Murcia, Spain,
and Institute of Mathematics and Informatics, Bulgarian Academy of Science, bl. 8, acad. G. Bonchev str., 1113 Sofia, Bulgaria}
\email{stroya@um.es}
\author{C. Zanco}
\address{Dipartimento di Matematica, Universit\`a degli Studi, Via C. Saldini, 50, 20133 Milano MI, Italy}
\email{clemente.zanco@unimi.it}

\thanks{Acknowlegdements. V. P. Fonf and C. Zanco were partially supported
by the Gruppo Nazionale per l'Analisi Matematica, la Probabilit\`a e le loro Applicazioni (GNAMPA) of the Istituto Nazionale di Alta Matematica
(INdAM) of Italy. S. Lajara was supported by
MICINN projects MTM2014-54182-P and MTM2015-65825-P (Spain), by MTM2017-86182-P (AEI/FEDER, UE) and by the Fundaci\'on S\'eneca (Agencia de Ciencia y Tecnolog\'ia de la Regi\'on de Murcia) under project 19275/PI/14.  S. Troyanski was supported by
MICINN project MTM2014-54182-P (Spain), by MTM2017-86182-P (AEI/FEDER, UE),  by the Fundaci\'on S\'eneca (Agencia de Ciencia y Tecnolog\'ia de la Regi\'on de Murcia) under project 19275/PI/14 and by the Bulgarian National Scientific Fund. DFNI-I02/10, 2015.}

\subjclass[2010]{Primary 46B20, 46B10; Secondary 46B15}
\date{\today}
\keywords{Norming subspace, total subspace, reflexive subspace, $M$-bibasic system.}

\begin{document}

\baselineskip=15pt
\begin{abstract}
We show that, if $X$ is a closed subspace of a Banach space $E$
and $Z$ is a closed subspace of $E^*$ such that $Z$ is norming for
$X$ and $X$ is total over $Z$ (as well as $X$ is norming for $Z$
and $Z$ is total over $X$), then $X$ and the pre-annihilator of
$Z$ are complemented in $E$ whenever $Z$ is $w^*$-closed or $X$ is
reflexive.
\end{abstract}
\maketitle


Let $E$ be a Banach space, let $X$ be a subspace of $E$ and $Z$ be
a subspace of $E^*$ (the dual space to $E$). We say that \emph{$Z$ is norming for $X$} if the
formula
$$|||x||| = \sup_{f\in B_Z} |f(x)|, \,\,\, x\in X$$
defines an equivalent norm on $X$ (where $B_Z$ denotes the unit
ball of $Z$). It is clear that if $Z$ is norming for $X$, then
$Z$ is total over $X$ (that is, $X\cap Z_{\perp} = \{0\}$, where
$Z_{\perp} = \{x \in E: f(x) = 0 \ {\rm for \ every} \ f \in Z\})$. Analogously, if \emph{$X$ is norming for $Z$} (namely, if
the image of $X$ through the canonical maping $\pi:E\to E^{**}$ is norming for $Z$), then $X$ is total over $Z$
(that is, $X^{\perp}\cap Z = \{0\}$, where $X^{\perp} = \{f \in
E^*: f(x) = 0 \ {\rm for \ every} \ x \in X\}$).


A systematic treatment of these properties was carried out in the paper
\cite{DDL}, devoted to the study of norming bibasic systems in
Banach spaces. In particular, there it was shown that, if $X$ is a
closed subspace of of Banach space $E$ and $Z$ is a closed
subspace of $E^*$, then $X$ is norming for $Z$ if and only if the
restriction to $Z$ of the restriction map $q^*:E^*\to X^*$ is an
isomorphic embedding, if and only if $X$ is total over $Z$ and the
(direct) sum $X^{\perp} \oplus Z$ is closed in $E^*$.


In this note, we obtain a generalization of this statement, which
provides a characterization of the property that $Z$ is norming
for $X$ whenever the ball $B_Z$ is $w^*$-dense in
$B_{\overline{Z}^{w^*}}$. This result will be used to show that,
in the case $Z$ is $w^*$-closed, the pair of conditions ``$X$ is
total over $Z$" and ``$Z$ is norming for $X$'' (as well as ``$Z$
is total over $X$" and ``$X$ is norming for $Z$") entail that $E =
X\oplus Z_{\perp}$. Using a duality argument, we deduce that the
same assertion holds true in the case $X$ is reflexive. As an
application of this result, we provide a criterion for the
existence of a sequence of extensions $\{f_i\}_{i=1}^{\infty}$ of
the functionals associated to an $M$-basis from a reflexive
subspace of $X$ of a separable Banach space $E$ with the property
that $[f_i]$ (the closed linear span of $\{f_i\}_i$) is norming
for $E$.



\

The main result of this note reads as follows.

\begin{theorem}\label{main1}
Let $E$ be a Banach space and $X$ be a closed subspace of $E$. If
$Z$ is a $w^*$-closed subspace of $E^*$, then the following
assertions are equivalent:
\begin{enumerate}
\item $Z$ is norming for $X$ and $X$ is total over $Z$.
\item $X$ is norming for $Z$ and $Z$ is total over $X$.
\item $X$ is norming for $Z$ and $Z$ is norming for $X$.
\item $E=X\oplus Z_{\perp}$.
\end{enumerate}
\end{theorem}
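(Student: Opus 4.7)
The plan is to reduce all four conditions to properties of a single auxiliary operator, exploiting the hypothesis that $Z$ is $w^*$-closed. Set $Y = Z_{\perp}$; since $Z$ is $w^*$-closed, the bipolar theorem gives $Z = Y^{\perp}$, and $Z$ is canonically isometric to $(E/Y)^*$. I would introduce the natural bounded operator $T\colon X \to E/Y$, $Tx = x + Y$, whose adjoint $T^*\colon Z \to X^*$ is simply the restriction map $f \mapsto f|_X$.

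Next I would translate each of the four conditions into a property of $T$ or $T^*$. Totality of $Z$ over $X$ means $X \cap Y = \{0\}$, i.e.\ $T$ is injective. Totality of $X$ over $Z$ means $X + Y$ is dense in $E$, equivalently $T$ has dense range, equivalently $T^*$ is injective. Using the identity $\sup_{f \in B_Z}|f(x)| = \sup_{f \in B_{Y^{\perp}}}|f(x)| = \dist(x,Y) = \|Tx\|_{E/Y}$ (valid precisely because $Z = Y^{\perp}$), the condition ``$Z$ is norming for $X$'' unwinds to ``$T$ is bounded below''. Directly from the definition, ``$X$ is norming for $Z$'' reads ``$T^*$ is bounded below''. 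Finally, (4) $E = X \oplus Z_{\perp}$ says exactly that $T$ is a bijection, hence an isomorphism by the open mapping theorem.

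With these translations in hand, the equivalence of (1)--(4) reduces to standard duality for bounded operators between Banach spaces, namely that $T^*$ is bounded below iff $T$ is surjective (closed range theorem), and $T^*$ is injective iff $T$ has dense range. Then (1) reads ``$T$ bounded below with dense range'' (closed dense range, hence bijective); (2) reads ``$T^*$ bounded below and $T$ injective'' (surjective and injective); (3) reads ``$T$ and $T^*$ both bounded below'' (injective with closed range, and surjective). Each of these is equivalent to $T$ being a bijection, which is (4).

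The one genuinely substantive step is the identification $\sup_{f\in B_Z}|f(x)| = \dist(x,Y)$, which relies on the $w^*$-closedness of $Z$ (so that $Z = Y^{\perp}$ and Hahn--Banach applies); without this hypothesis the supremum could be strictly smaller than the distance, and ``$Z$ norming for $X$'' would be a strictly weaker condition than ``$T$ bounded below''. Once this is in place, the remainder is pure bookkeeping via the closed range theorem and open mapping theorem, so I do not anticipate any serious further obstacle.
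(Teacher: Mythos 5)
Your proof is correct, and it takes a genuinely more streamlined route than the paper's, even though both are built on the same underlying operator: your $T$ is exactly the restriction $Q_{\res X}$ of the quotient map $Q:E\to E/Z_{\perp}$ that drives the paper's Lemma \ref{t1}, and your adjoint $T^*$ is the restriction map $q^*_{\res Z}$. The shared pivot is the identification of ``$Z$ is norming for $X$'' with ``$T$ is bounded below'', which in your version is the exact isometric identity $\sup_{f\in B_{Y^{\perp}}}|f(x)|=\mathrm{dist}(x,Y)$ coming from $(E/Y)^*\cong Y^{\perp}$; you are right that this is the one place where $w^*$-closedness of $Z$ is indispensable. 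Where you diverge is in everything downstream. The paper proves $(1)\Rightarrow(4)$ by combining its Lemma \ref{t1} (whose implication $(c)\Rightarrow(a)$ goes through a decomposition of $(X\oplus Z_{\perp})^*$) with a separate density argument, and proves $(2)\Rightarrow(4)$ by showing that $q^*(Z)$ is a $w^*$-closed, total, hence full subspace of $X^*$, via a net argument and the Banach--Dieudonn\'e theorem. You replace all of this with the standard duality package for bounded operators --- $T^*$ injective iff $T$ has dense range, $T^*$ bounded below iff $T$ surjective, $T$ bounded below iff injective with closed range --- which collapses conditions (1)--(3) to four descriptions of ``$T$ is a bijection'' and absorbs the paper's most technical step into the classical surjectivity criterion. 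What the paper's longer route buys is the extra generality of Lemma \ref{t1} for non-$w^*$-closed $Z$ (statements about $\overline{Z}^{w^*}$ and about $B_Z$ being $w^*$-dense in $B_{\overline{Z}^{w^*}}$), which it reuses for Corollary \ref{ddl-th2} and the subsequent remarks; your argument uses $Z=(Z_{\perp})^{\perp}$ essentially and so is confined to the $w^*$-closed case, which is all the theorem requires. One cosmetic point: the fact you invoke as the ``closed range theorem'' ($T$ onto iff $T^*$ bounded below) is really its standard companion, the surjectivity criterion; it is classical either way, so nothing is at stake.
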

In the proof of this result we shall use the following lemma.

\begin{lemma}\label{t1}
Let $E$ be a Banach space. If $X$ is a closed subspace of $E$ and $Z$ is a closed subspace of $E^*$, then the following conditions are equivalent:
\begin{enumerate}
\item[(a)] $\overline{Z}^{w^*}$ is norming for $X$.
\item[(b)] The restriction to $X$ of the quotient map $Q:E\to E/Z_{\perp}$ is an isomorphic embedding.
\item[(c)] $Z$ is total over $X$ and the (direct) sum $X\oplus Z_{\perp}$ is closed in $E$.
\end{enumerate}
If in addition, $B_Z$ is $w^*$-dense in $B_{\overline{Z}^{w^*}}$, then these conditions are equivalent to:
\begin{enumerate}
\item[(d)] $Z$ is norming for $X$.
\end{enumerate}
\end{lemma}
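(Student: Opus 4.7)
The plan is to set up a dictionary between the norming condition and the quotient map $Q : E \to E/Z_{\perp}$, establish (a) $\Leftrightarrow$ (b) $\Leftrightarrow$ (c) through this dictionary, and then deduce (a) $\Leftrightarrow$ (d) from the $w^*$-continuity of evaluation functionals. The central observation is the standard isometric identification $(E/Z_{\perp})^* = (Z_{\perp})^{\perp} = \overline{Z}^{w^*}$, which yields
\[
\|Q(x)\| \;=\; \sup_{f \in B_{\overline{Z}^{w^*}}} |f(x)| \qquad (x \in E).
\]

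With this formula in hand, (a) $\Leftrightarrow$ (b) is immediate: $\overline{Z}^{w^*}$ is norming for $X$ precisely when $Q|_X$ is bounded below, which is exactly the content of (b). For (b) $\Leftrightarrow$ (c) I would invoke the standard consequence of the open mapping theorem that a bounded linear operator between Banach spaces is an isomorphic embedding if and only if it is injective with closed range. For $T = Q|_X$, injectivity reads as $X \cap Z_{\perp} = \{0\}$, i.e.\ as $Z$ being total over $X$, while $Q(X) = (X+Z_{\perp})/Z_{\perp}$ is closed in $E/Z_{\perp}$ if and only if $X + Z_{\perp}$ is closed in $E$; under the injectivity assumption the latter equals $X \oplus Z_{\perp}$, so the two clauses of (c) are exactly recovered.

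Finally, assuming the $w^*$-density of $B_Z$ in $B_{\overline{Z}^{w^*}}$, for each fixed $x \in E$ the functional $f \mapsto f(x)$ is $w^*$-continuous on $E^*$, so
\[
\sup_{f \in B_Z} |f(x)| \;=\; \sup_{f \in B_{\overline{Z}^{w^*}}} |f(x)|,
\]
which makes (a) and (d) literally the same statement about the norm induced on $X$. The whole argument is essentially formal bookkeeping with annihilators and standard duality, and I do not expect any real obstacle; the single point requiring some care is the identification $(Z_{\perp})^{\perp} = \overline{Z}^{w^*}$, where the $w^*$-closure cannot be dropped since $Z$ is \emph{a priori} only norm-closed and need not coincide with $\overline{Z}^{w^*}$.
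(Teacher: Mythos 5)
Your proof is correct, but it takes a noticeably more streamlined route than the paper's. The paper proves the cycle $(a)\Rightarrow(b)\Rightarrow(c)\Rightarrow(a)$ with three separate hands-on arguments: $(a)\Rightarrow(b)$ by estimating $\|x-y\|\geq f(x-y)=f(x)$ for a near-norming $f\in B_{\overline{Z}^{w^*}}$ (which is only the inequality $\|Qx\|\geq\sup_{f\in B_{\overline{Z}^{w^*}}}|f(x)|$, i.e.\ one half of your isometric identity); $(b)\Rightarrow(c)$ by showing the unit spheres of $X$ and $Z_{\perp}$ are at positive distance; and $(c)\Rightarrow(a)$ by decomposing $(X\oplus Z_{\perp})^*=M\oplus N$ and extending a functional from $N$ to all of $E$ by Hahn--Banach. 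Your single appeal to the isometric identification $(E/Z_{\perp})^*=(Z_{\perp})^{\perp}=\overline{Z}^{w^*}$ packages both halves of that identity at once (the Hahn--Banach extension being hidden inside the standard duality $(E/N)^*\cong N^{\perp}$), so $(a)\Leftrightarrow(b)$ becomes a tautology and the paper's entire $(c)\Rightarrow(a)$ construction is absorbed. Likewise your $(b)\Leftrightarrow(c)$ via ``isomorphic embedding $=$ injective with closed range'' plus the observation that $Q^{-1}(Q(X))=X+Z_{\perp}$ replaces the paper's sphere-gap argument and gives both directions of that equivalence uniformly; both versions ultimately rest on the open mapping theorem. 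The treatment of $(a)\Leftrightarrow(d)$ under the $w^*$-density hypothesis is identical to the paper's. In short: same skeleton, but your proof trades the paper's explicit elementary estimates for the standard duality of quotients, which is shorter and arguably more transparent; the paper's version has the minor virtue of exhibiting the norming constants explicitly.
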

\begin{proof} $(a)\Rightarrow (b)$ Let us write $F= \overline{Z}^{w^*}$, and let $\lambda\in (0,1]$ be a number
satisfying $$\sup_{f\in B_F} |f(x)|\geq \lambda \|x\| \quad
\text{for every} \quad  x\in X.$$ Fix $x\in X$ and pick $f\in B_F$
such that $f(x) \geq \lambda\|x\|/2$. As $(Z_{\perp})^{\perp} =
F$, it easily follows that $F_{\perp}= Z_{\perp}$. Therefore, for
each $y\in Z_{\perp}$ we have $$\|x-y\|\geq f(x-y) = f(x)\geq
\lambda \|x\|/2$$ and consequently, $\|Qx\| = \inf \left\{\|x-y\|:
\, y\in Z_{\perp}\right\} \geq \lambda \|x\|/2.$ Hence the
operator $Q_{\res X}$ is an isomorphic embedding.

\

$(b)\Rightarrow (c)$ It suffices to show that $\inf \{\|x-y\|:\,
x\in S_X, \, y\in S_{Z_{\perp}}\}> 0,$ where $S_X$ and
$S_{Z_{\perp}}$ denote respectively the unit spheres of $X$ and
$Z_{\perp}$. Assume the contrary. Then there exist sequences
$(x_n)_n\subset S_X$ and $(y_n)_n\subset S_{Z_{\perp}}$ such that
$\|x_n-y_n\|\to 0$. Thus, $\|Qx_n-Qy_n\|\to 0$, and hence $\|Qx_n\|\to
0$.  But, because of our assumption we have $\|Qx_n\|\geq \lambda \|x_n\|=
\lambda$ for some $\lambda\in (0,1]$ and every $n\in \mathbb N$.
Therefore, the manifold $X\oplus Z_{\perp}$ is a closed subspace of
$E$.

\

$(c)\Rightarrow (a)$ Set $U= X\oplus Z_{\perp}$ and let $M$ and
$N$ denote the annihilator subspaces of $X$ and $Z_{\perp}$
relative to $U$, that is, $M = \left\{f\in U^*:\, f \res
X=0\right\}$ and $N = \left\{g\in U^*: \, g \res Z_{\perp}
=0\right\}$. Since $U$ is closed, according to \cite[Excercise
4.36]{Fabian} it follows that $U^* = M\oplus N$. In particular,
there exists $\alpha> 0$ such that
$$\alpha \left(\|f\|+\|g\|\right)\leq \|f+g\|\leq \|f\|+ \|g\| \quad {\rm whenever}
 \quad f\in M\,\, \text{and}\,\, g\in N.$$
Choose a vector $x\in X$ with $\|x\|=1$, take $\varphi\in U^*$
with $\varphi(x)= \|\varphi\| = 1$ and let functionals $f\in M$
and $g\in N$ such that $\varphi = f+g$. It is clear that  $g(x) =
\varphi(x) = 1$ and, because of the previous inequality, we have
$\|g\|\leq \alpha^{-1}$. Therefore, the functional $\psi = \alpha
g$ belongs to $B_{N}$ and $\psi(x)\geq \alpha$. Now, let
$\widehat{\psi}\in E^*$ be such that $\widehat{\psi} \res U =
\psi$ and $\|\widehat{\psi}\| = \|\psi\|$. Then, $\widehat{\psi}
\in B_{(Z_{\perp})^{\perp}} = B_{\overline{Z}^{w^*}}$ and
$\widehat{\psi}(x)\geq \alpha$. Consequently, the subspace $
\overline{Z}^{w^*}$ is norming for $X$.

\

Finally, it is clear that $(d)$ implies $(a)$ (with no additional
assumption). Further, assuming $\overline{B_{Z}}^{w^*} =
B_{\overline{Z}^{w^*}}$, we have ${\small \sup \limits_{f\in B_Z}
f(x) = \sup \limits_{f\in B_{\overline{Z}^{w^*}}}  f(x)}$ for
every $x\in X$, hence $(a)\Rightarrow (d)$. \end{proof}

\noindent \emph{Remarks.} (1) In general, without the assumption
$\overline{B_Z}^{w*} = B_{\overline{Z}^{w^*}}$,  assertion $(d)$
in the previous lemma is not implied by the other ones. Indeed, if
$E$ is any non quasi-reflexive Banach space then, according to the
main result in \cite{DL} there exists a closed subspace $Z\subset
E^*$ such that $Z$ is total but not norming for $E$. Hence
$\overline{Z}^{w^*} = E^*$, so $\overline{Z}^{w^*}$ is norming for
$E$.

\

(2) If  $E$ is a weakly compactly generated Banach space not
isomorphic to a Hilbert space, then for every non-complemented
subspace $X\subset E$ there is a $w^*$-closed subspace $Z\subset
E^*$ such that $Z$ is total but not norming for $X$. Indeed,
thanks to \cite[Theorem 13.48]{Fabian} there exists a subspace
$Y\subset E$ such that $X\cap Y = \{0\}$ and $X+ Y$ is dense in
$E$. Set $Z = Y^{\perp}$. Then $Z$ is a $w^*$-closed subspace of
$E^*$ and $X\cap Z_{\perp} = X\cap Y = \{0\}$. Hence $Z$ is total
over $X$. Since $X+ Z_{\perp}$ is a proper dense manifold in $E$,
it follows that it is not closed. By using Lemma \ref{t1} we
deduce that $Z$ is not norming for $X$.

\

As a particular case of the former lemma we get the aforementioned result from \cite{DDL}.
\begin{corollary}{\rm (\cite[Theorem 2]{DDL})}\label{ddl-th2} Let $E$ be a Banach space. If $X$ is a closed subspace of $E$ and $Z$ is a closed subspace of $E^*$, then the following conditions are equivalent:
\begin{enumerate}
\item[(1)] $X$ is norming for $Z$.
\item[(2)] The restriction to $Z$ of the restriction map $q^*:E^*\to X^*$ is an isomorphic embedding.
\item[(3)] $X$ is total over $Z$ and the (direct) sum $X^{\perp}\oplus Z$ is closed in $E^*$.
\end{enumerate}
\end{corollary}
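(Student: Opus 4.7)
The plan is to derive this corollary from Lemma~\ref{t1} by applying the lemma in the dual setting. Concretely, I would instantiate the lemma with the ambient Banach space taken to be $E^*$, the ``closed subspace of $E$'' role played by $Z\subset E^*$, and the ``closed subspace of $E^*$'' role played by $\pi(X)\subset E^{**}$, where $\pi:E\to E^{**}$ denotes the canonical embedding. This substitution is legitimate because $\pi$ is an isometry, so $\pi(X)$ is closed in $E^{**}=(E^*)^*$.

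Once the substitution is made, the clauses of Lemma~\ref{t1} transcribe almost verbatim into the language of the corollary. Since $\langle f,\pi(x)\rangle=f(x)$ for all $f\in E^*$ and $x\in X$, one has $\pi(X)_{\perp}=X^{\perp}$ inside $E^*$. Hence ``$\pi(X)$ is total over $Z$'' reads $Z\cap X^{\perp}=\{0\}$, i.e. $X$ is total over $Z$ in the sense of the paper, and $Z\oplus\pi(X)_{\perp}=Z\oplus X^{\perp}$ is closed in $E^*$; this matches clause (c) of the lemma with clause (3) of the corollary. Moreover, the canonical isometric isomorphism $E^*/X^{\perp}\to X^*$ given by $f+X^{\perp}\mapsto f\res X$ turns the quotient map $E^*\to E^*/X^{\perp}$ into the restriction map $q^*:E^*\to X^*$, so clause (b) becomes clause (2). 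Finally, ``$\pi(X)$ is norming for $Z$'' is, after unwinding the definitions, exactly the statement ``$X$ is norming for $Z$'', i.e. clause (1).

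Everything then reduces to verifying the additional hypothesis of Lemma~\ref{t1} needed to invoke clause (d), namely that $B_{\pi(X)}$ is $w^*$-dense in $B_{\overline{\pi(X)}^{w^*}}$. This is the one substantive point: it follows from Goldstine's theorem, via the bipolar identification $\overline{\pi(X)}^{w^*}=X^{\perp\perp}$ inside $E^{**}$ together with the canonical isometric identification $X^{\perp\perp}\cong X^{**}$, under which $\pi(B_X)$ is sent onto the natural image of $B_X$ in $X^{**}$ and is therefore $w^*$-dense in $B_{X^{**}}$. Once this density is secured, Lemma~\ref{t1} gives the equivalence of (a), (b), (c) and (d) in the transcribed setting, which is precisely the equivalence of (1), (2) and (3). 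I expect the Goldstine step to be the only nontrivial ingredient: the first remark after Lemma~\ref{t1} shows that, without such a density input, the equivalence of (d) with the other clauses can genuinely fail.
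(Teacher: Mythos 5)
Your proposal is correct and is essentially identical to the paper's own proof: the authors likewise set $\widetilde{E}=E^*$, $\widetilde{X}=Z$, $\widetilde{Z}=\pi(X)$, note $\widetilde{Z}_{\perp}=X^{\perp}$, identify the quotient map with $q^*_{\res Z}$ via $E^*/X^{\perp}\cong X^*$, and invoke Goldstine's theorem to obtain $\overline{B_{\widetilde{Z}}}^{w^*}=B_{\overline{\widetilde{Z}}^{w^*}}$ so that clause (d) of Lemma~\ref{t1} applies. You have correctly isolated the Goldstine step as the one substantive verification.
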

\begin{proof}
Put $\widetilde{E} = E^*$, $\widetilde{X} = Z$ and $\widetilde{Z}
= \pi(X)$. Then, $\widetilde{Z}_{\perp}=X^{\perp}$. Hence,
assertion $(1)$ is satisfied if and only if $\widetilde{Z}$ is
norming for $\widetilde{X}$, and condition $(3)$ is equivalent to
the properties that $\widetilde{Z}$ is total over $\widetilde{X}$
and $\widetilde{Z}_{\perp}\oplus \widetilde{X}$ is closed in
$\widetilde{E}$. Further, since $E^*/X^{\perp}\cong X^*$, the
quotient map $Q:\widetilde{X}\to
\widetilde{E}/\widetilde{Z}_{\perp}$ can be identified with the
restriction operator $q^*_{\res Z}:Z\to X^*$, thus condition $(2)$
is equivalent to the fact that $Q:\widetilde{X}\to
\widetilde{E}/\widetilde{Z}_{\perp}$ is an isomorphic embedding.
Finally, thanks to Goldstine's theorem we have
$\overline{B_{\widetilde{Z}}}^{w^*} =
B_{\overline{\widetilde{Z}}^{w^*}}$ and Lemma \ref{t1} applies.
\end{proof}

\noindent \emph{Proof of Theorem \ref{main1}.} It is clear that
$(4)$ entails $(1)$ and $(2)$. Thus, it is enough to prove the
implications $(1)\Rightarrow (4)$, $(2)\Rightarrow (4)$ and
$(4)\Rightarrow (3)$.

\

$(1) \Rightarrow (4)$  Because of the hypothesis we have $X\cap
Z_{\perp} = \{0\}$. We claim that the direct sum $X\oplus
Z_{\perp}$ is dense in $E$. Indeed, since $Z$ is $w^*$-closed, the
adjoint operator of the map $Q_{\res X}: X\to E/Z_{\perp}$ can be
identified with the restriction map $q^*_{\res Z}: Z\to X^*$. It
is clear that $\ker q^*_{\res Z} = X^{\perp}\cap Z$. Bearing in
mind that $X$ is total over $Z$, it follows that $q^*_{\res Z}$ is
one-to-one. Hence, the operator $Q_{\res X}$ has dense range, and
using the Hahn-Banach theorem we deduce that the manifold $X\oplus
Z_{\perp}$ is dense in $E$. On the other hand, as $Z$ is norming
for $X$, Lemma \ref{t1} guarantees that $X\oplus Z_{\perp}$ is
closed. Consequently, $E= X\oplus Z_{\perp}$.

\

$(2)\Rightarrow (4)$ As $Z$ is $w^*$-closed we have $Z =
(Z_{\perp})^{\perp}$. Therefore, according to \cite[Excercise
4.16]{Fabian}, it is enough to show that $E^* = X^{\perp}\oplus Z$,
and this is clearly equivalent to the fact that the operator
$q^*_{\res Z}$ is an isomorphism from $Z$ onto $X^*$. Since $X$ is
norming for $Z$, Corollary \ref{ddl-th2} yields the existence of a
number $\lambda> 0$ such that $$\|q^*(z)\|\geq \lambda \|z\| \quad
\text{ for every } \quad z\in Z.$$ Therefore, by the open mapping
theorem, it is enough to check that $q^*_{\res Z}$ is onto. We
claim that $M = q^*(Z)$ is a $w^*$-closed subspace of $X^*$.
According to the Banach-Dieudonn\'{e} Theorem (see e.g.
\cite[Theorem 3.92]{Fabian}), it is enough to prove that the set
$B_M$ is $w^*$-closed. Let $\{x_{\alpha}^*\}_{\alpha\in \Lambda}$
be a net in $B_M$ such that
$x_{\alpha}^*\stackrel{\text{w}^*}{\longrightarrow} x^*$ for some
$x^*\in X^*$. For each $\alpha\in \Lambda$ there is (a unique)
$z_{\alpha}\in Z$ with $q^*(z_{\alpha}) = x_{\alpha}^*$. By the
previous inequality we have $\|z_{\alpha}\|\leq \lambda^{-1}$ for
each $\alpha\in \Lambda.$ Thus the net $\{z_{\alpha}\}_{\alpha}$
has a $w^*$-cluster point, say $z\in E^*$. As $Z$ is $w^*$-closed,
we get $z\in Z$. Moreover, since the map $q^*$ is $w^*$-$w^*$
continuous, we have
$q^*(z_{\alpha})\stackrel{\text{w}^*}{\longrightarrow} q^*(z)$,
that is $x_{\alpha}^*\stackrel{\text{w}^*}{\longrightarrow}
q^*(z)$. Consequently, $x^*= q^*(z)$, so $x^*\in B_M$. Therefore,
$B_M$ (hence also $M$) is $w^*$-closed. Since $M$ is also is total
over $X$ we have $M = X^*$.

\

$(4)\Rightarrow (3)$ Taking into account that $Z$ is $w^*$-closed
and the manifold $X\oplus Z_{\perp}$ is closed in $E$, from Lemma
\ref{t1} we deduce that $Z$ is norming for $X$. Moreover, a new
appeal to \cite[Exercise 4.16]{Fabian} yields $E^* =
X^{\perp}\oplus Z$, which implies that $X$ is norming for $Z$.
\hfill $\square$

\

The next result constitutes an analogue of Theorem \ref{main1} in
case $X$ is reflexive.

\begin{corollary}\label{main2}
Let $E$ be a Banach space, let $X$ be a closed subspace
of $E$ and $Z$ be a closed subspace of $E^*$. If $X$ is reflexive then the following
conditions are equivalent:
\begin{enumerate}
\item $X$ is norming for $Z$ and $Z$ is total over $X$.
\item $Z$ is norming for $X$ and $X$ is total over $Z$.
\item $X$ is norming for $Z$ and $Z$ is norming for $X$.
\item $Z$ is $w^*$-closed and $E=X\oplus Z_{\perp}$.
\end{enumerate}
\end{corollary}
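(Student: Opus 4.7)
The plan is to derive the corollary from Theorem \ref{main1} via a duality argument in the pair $(E^*, E^{**})$, and then to bridge the dual-space decomposition produced by that theorem with condition (4). Reflexivity of $X$ enters in two places: it guarantees that $\pi(X)$ is $w^*$-closed in $E^{**}$ (since $B_X$ is weakly compact, $\pi(B_X)$ is $w^*$-compact, and Banach--Dieudonn\'e applies), and it will let me push an operator a priori valued in $X^{**}$ down to one valued in $X$.

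I would set $\widetilde{E} = E^*$, $\widetilde{X} = Z$, and $\widetilde{Z} = \pi(X) \subseteq E^{**} = \widetilde{E}^*$. Using $\widetilde{Z}_\perp = X^\perp$ together with the canonical isometric identification $\pi(X) \cong X$, a routine translation shows that assertions (1), (2), (3) of Theorem \ref{main1} applied to $(\widetilde{E}, \widetilde{X}, \widetilde{Z})$ coincide respectively with assertions (1), (2), (3) of the corollary, while assertion (4) there reads $E^* = Z \oplus X^\perp$. Theorem \ref{main1} therefore already delivers the equivalence of (1), (2), (3) among themselves and with the decomposition $E^* = Z \oplus X^\perp$, so the remaining task is to show that this decomposition is equivalent to condition (4).

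The direction from (4) is immediate: $w^*$-closedness of $Z$ gives $Z = (Z_\perp)^\perp$, and Exercise 4.16 of \cite{Fabian}, invoked exactly as in the proof of Theorem \ref{main1}, converts $E = X \oplus Z_\perp$ into $E^* = X^\perp \oplus Z$.

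The substantive direction is the converse. Assuming $E^* = Z \oplus X^\perp$, Corollary \ref{ddl-th2} shows that $q^*|_Z : Z \to X^*$ is an isomorphism onto $X^*$; call its inverse $\phi : X^* \to Z \subseteq E^*$. I would then define $P : E \to X^{**}$ by $(Pe)(x^*) = \phi(x^*)(e)$. This $P$ is linear and bounded, and reflexivity of $X$ lets me view $P$ as valued in $X$. A direct computation using $\phi(x^*)|_X = x^*$ shows $Px = x$ for $x \in X$, so $P$ is a bounded projection onto $X$ with kernel $\{e \in E : z(e) = 0 \text{ for all } z \in Z\} = Z_\perp$, giving $E = X \oplus Z_\perp$. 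A further short calculation identifies the adjoint $P^*$ with $\phi$, and hence $Z = \phi(X^*) = P^*(X^*) = (\ker P)^\perp = (Z_\perp)^\perp$, which is $w^*$-closed. The main obstacle I anticipate is precisely this last step: recognising $\phi$ as the adjoint of the canonically built $P$, which is what pins down $Z$ as the full preannihilator $(Z_\perp)^\perp$ rather than leaving it as one of possibly many complements of $X^\perp$ in $E^*$.
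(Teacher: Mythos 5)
Your proof is correct, and it diverges from the paper's argument at the decisive point. Both proofs use the same duality reduction: applying Theorem \ref{main1} to $\widetilde{E}=E^*$, $\widetilde{X}=Z$, $\widetilde{Z}=\pi(X)$ (legitimate because reflexivity of $X$ makes $\pi(X)$ a $w^*$-closed subspace of $E^{**}$) yields the equivalence of (1), (2) and (3). The difference lies in how condition (4) is reached. The paper shows directly that (1) forces $Z$ to be $w^*$-closed: by Corollary \ref{ddl-th2} the map $q^*_{\res Z}$ identifies $Z$ with a closed subspace of the reflexive space $X^*$, so $Z$ is reflexive, and a reflexive subspace of a dual space is automatically $w^*$-closed by \cite[Lemma 4.62]{Fabian}; once this is known, Theorem \ref{main1} applied in $E$ itself delivers $E = X \oplus Z_{\perp}$ with no further work. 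You instead stay entirely in the dual, extract the decomposition $E^* = Z \oplus X^{\perp}$ from Theorem \ref{main1}, and manufacture the projection $P$ of $E$ onto $X$ from $\phi = (q^*_{\res Z})^{-1}$, reading off both $E = X \oplus Z_{\perp}$ and $Z = P^*(X^*) = (\ker P)^{\perp} = (Z_{\perp})^{\perp}$ from the identity $P^* = \phi$ (valid because $P$, being a surjection onto $X$, has adjoint with $w^*$-closed range equal to $(\ker P)^{\perp}$). Your route is more hands-on and self-contained, replacing the citation of Lemma 4.62 by an explicit operator construction, at the cost of the extra bookkeeping needed to pin down $Z$ as the full preannihilator $(Z_{\perp})^{\perp}$ rather than merely some complement of $X^{\perp}$; the paper's route is shorter but leans on the general fact about reflexive subspaces of dual spaces. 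Both arguments are sound.
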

\begin{proof}
Notice that the facts that $X$ is reflexive and norming for $Z$ entail that $Z$
is $w^*$-closed. Indeed, in such a case, by Corollary
\ref{ddl-th2} the map $q^*_{\res Z}: Z\to X^*$ is an isomorphic
embedding. Therefore, $q^*(Z)$ is a closed subspace of $X^*$ that
is isomorphic to $Z$. Since $X^*$ is reflexive, $q^*(Z)$ is
reflexive as well, and using \cite[Lemma 4.62]{Fabian} we deduce
that $Z$ is $w^*$-closed. Thus Theorem \ref{main1} yields that
$(1)$ implies $(4)$. The reverse implication is obvious. Hence, it
remains to show that assertions $(1)$, $(2)$ and $(3)$ are
equivalent. This follows by duality applying Theorem \ref{main1}
to space $\widetilde{E} = E^*$ and to subspaces $\widetilde{X} =
Z\subset \widetilde{E}$ and $\widetilde{Z} = \pi(X) \subset
\widetilde{E\,}^*$. Observe that, since $X$ is reflexive,
$\widetilde{Z}$ is a $w^*$-closed subspace of $\widetilde{E\,}^*$.
\end{proof}

\noindent \emph{Remarks.} (1) A Banach space $E$ is called
\emph{indecomposable} if $E$ cannot be written as the direct sum
of two infinite dimensional closed subspaces. The first example of
an indecomposable Banach space was constructed by Gowers and
Maurey (c.f. \cite{GM}): it enjoys the much stronger property of
being \emph{hereditarily indecomposable} (i.e., every infinite
dimensional closed subspace of that space is indecomposable). As
an immediate consequence of Lemma \ref{t1}, the fact (pointed out
by V. D. Milman, c.f. \cite[Lemma 1.1]{AT}) follows that a Banach
space $E$ is hereditarily indecomposable if, and only if, for any
closed subspace $X\subset E$ with $\dim (X) = \infty$ and each
$w^*$-closed subspace $Z\subset E^*$ such that $Z$ is norming for
$X$, we have ${\rm codim}\, (Z) < \infty$. Analogously, an easy
consequence of Theorem \ref{main1} yields the following
characterization of indecomposable spaces: A Banach space $E$ is
indecomposable if, and only if, for every closed subspace $X
\subset E$ with $\dim (X) = \infty$ and every $w^*$-closed
subspace $Z\subset E^*$ such that $Z$ is norming for $X$ and $X^{\perp}\cap Z = \{0\}$ (as well as $X$ is norming for $Z$ and $X \cap Z_{\perp}=\{0\}$) we
have ${\rm codim}\, (Z) < \infty$.

\

(2) A Banach space $E$ is said to have the \emph{Dunford-Pettis property} (in short, DPP),
if for any two weakly null sequences $\{x_n\}_n\subset E$ and $\{x_n^*\}_n\subset E^*$
we have $x_n^*(x_n)\to 0$ (for some equivalent formulations of this property see
e.g. \cite[Theorem 1]{Di-survey}). Typical examples of spaces with this
property are $L^1(\Omega,\mu)$, where $(\Omega,\mu)$ is any $\sigma$-finite measure space,
and $\mathcal{C}(K)$ for any compact Hausdorff space $K$. If $E$ is a Banach space
with the DPP and $X$ is a reflexive infinite dimensional subspace of $E$, then:
\begin{enumerate}
\item[$(a)$] for every closed subspace $Z\subset E^*$ such that $Z$ is
norming for $X$, we have that $X$ is not total over $Z$, and
\item[$(b)$] for every closed subspace $Z\subset E^*$ such that $X$ is
norming for $Z$, we have that $Z$ is not total over $X$.
\end{enumerate}
Indeed, being reflexive and infinite-dimensional, $X$ fails to
have the DPP (c.f. \cite[p. 597]{Fabian}). According to a clasical
result by Grothendieck (c.f. \cite[Lemma 13.44]{Fabian}) it
follows that $X$ is not complemented in $E$ and Corollary
\ref{main2} applies.

\

We end this note with an application of Corollary \ref{main2} in
the setting of $M$-bibasic systems in separable Banach spaces.
Recall that a sequence $\{x_i\}_{i=1}^{\infty}$ in a Banach space
$E$ is a \emph{Markushevich basis} (in short, \emph{$M$-basis}) of
$E$ provided that $E = [x_i]$ and there exists a (unique) sequence
of functionals $\{x_i^*\}_i\subset E^*$ such that $\{x_i,
x_i^*\}_i$ is a biorthogonal system in $E$ and the subspace
$[x_i^*]\subset E^*$ is total over $E$ (we refer to \cite[Section
1.f]{LTbook} for the fundamental properties of $M$-bases in Banach
spaces). We say that a biorthogonal system $\{x_i,
z_i\}_{i=1}^{\infty}$ in $E$ is \emph{$M$-bibasic} whenever
$\{x_i\}_i$ is an $M$-basis of $[x_i]$ and $\{z_i\}_i$ is an
$M$-basis of $[z_i]$. If $\{x_i\}_i$ and $\{z_i\}_i$ are basic
sequences, the system $\{x_i, z_i\}_i$ is called \emph{bibasic}.
It was shown in \cite{DDL} that every infinite-dimensional Banach
space has a bibasic system $\{x_i, z_i\}_{i=1}^{\infty}$ such that
$\sup_{i} \|x_i\|\|z_i\|< \infty$ ($\{x_i, z_i\}_i$ is said to be
\emph{bounded}) and $[z_i]$ is not norming for $[x_i]$. Actually,
the existence of norming bibasic (or $M$-bibasic systems) is a
rather strong condition (for instance, from the previous remark it
follows that, if $E$ is a Banach space with the DPP and $X$ is a
reflexive subspace of $E$, then no $M$-bibasic system $\{x_i,
z_i\}_i$ exists in $E$ with $\{x_i\}_i \subset X$ and such that
$[z_i]$ is norming for $[x_i]$). In fact, in some cases the
presence of an $M$-bibasic system $\{x_i, z_i\}_i$ with this
property yields the existence of a biorthogonal sequence of
extensions of the functionals $x_i^*$ which is norming for the
whole space.

\begin{corollary}
Let $X$ be a reflexive subspace of a separable Banach space $E$
and  $\{x_i, z_i\}_{i=1}^{\infty}\subset X\times E^*$ be a
bounded $M$-bibasic system. If $[z_i]$ is norming for $[x_i]$ then there
exists a bounded sequence $\{f_i\}_i\subset E^*$ such that $\{x_i, f_i\}_i$ is
a biorthogonal system and
$[f_i]$ is norming for $E$.
\end{corollary}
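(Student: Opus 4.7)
The plan is to first apply Corollary \ref{main2} in order to split $E$ as a direct sum, and then to exploit separability of $E$ to build the needed sequence of biorthogonal functionals. Setting $Y := [x_i]$ and $Z := [z_i]$, one has that $Y$ is a closed subspace of the reflexive space $X$, hence $Y$ itself is reflexive. The biorthogonality $z_i(x_j) = \delta_{ij}$ combined with the $M$-basis hypotheses on $\{x_i\}$ and $\{z_i\}$ yields $Y^\perp \cap Z = \{0\}$ (the evaluations $\{\hat{x}_i|_Z\} \subset Z^*$ are biorthogonal to $\{z_i\}$ and, being continuous, coincide with the $M$-basis coefficient functionals of $\{z_i\}$ in $Z^*$, which are total) and $Y \cap Z_\perp = \{0\}$ (by the analogous argument with $\{z_i|_Y\} \subset Y^*$). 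Combined with the hypothesis that $Z$ norms $Y$, condition~(2) of Corollary \ref{main2} holds, so $Z$ is $w^*$-closed and $E = Y \oplus Z_\perp$; taking annihilators also gives $E^* = Y^\perp \oplus Z$. Write $W := Z_\perp$ and let $P: E \to Y$ be the projection along $W$.

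Because $W \cong E/Y$ is separable, one can produce via Hahn--Banach applied to a countable dense subset of $S_W$ a uniformly bounded sequence $\{\tilde{\psi}_n\} \subset Y^\perp$ whose restrictions to $W$ span a norming subspace of $W^*$. Since the biorthogonality $f_i(x_j) = \delta_{ij}$ amounts to $f_i - z_i \in Y^\perp$, I set $f_i := z_i + g_i$ with $g_i \in Y^\perp$ built from $\{\tilde{\psi}_n\}$ in a suitable way (for instance $g_i := \tilde{\psi}_i$, possibly with small damping factors $\epsilon_i$). Boundedness of $\{f_i\}$ then follows from that of $\{g_i\}$.

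To establish that $[f_i]$ norms $E$, I write every $f = \sum_i b_i f_i \in [f_i]$ via the direct sum $E^* = Y^\perp \oplus Z$ as $f = z + h$ with $z = \sum_i b_i z_i \in Z$ and $h = \sum_i b_i g_i \in Y^\perp$, so that $f(x) = z(y) + h(w)$ for $x = y + w \in E$. The main obstacle is the coupling between $z$ and $h$ through the common sequence $(b_i)$, which prevents one from independently optimizing the norming of $y$ by $Z$ and of $w$ by the $\tilde{\psi}_n$'s. I plan to resolve this by a case analysis based on whether $\|y\|$ or $\|w\|$ dominates $\|x\|$: choosing the corrections $g_i$ of sufficiently small norm makes the $h$-contribution a controllable perturbation when $y$ dominates, while in the opposite case a careful interleaving of $\{\tilde{\psi}_n\}$ inside $\{g_i\}$, together with the equivalence $\|f\| \sim \|z\| + \|h\|$ from the direct-sum decomposition, allows the span $[g_i]$ to inherit enough of the norming property of $[\tilde{\psi}_n]$ for $W$ to handle the estimate for $w$. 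Combining the two cases yields $\sup_{f \in B_{[f_i]}} |f(x)| \gtrsim \|x\|$.
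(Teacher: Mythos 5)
Your first paragraph is sound and is essentially the paper's own starting point: deriving the totality conditions from the $M$-bibasic hypotheses and invoking Corollary \ref{main2} to conclude that $Z$ is $w^*$-closed, $E = Y \oplus Z_{\perp}$ and $E^* = Y^{\perp} \oplus Z$ (the paper even leaves the totality argument more implicit than you do), and the shape $f_i = z_i + g_i$ with $g_i \in Y^{\perp}$ is the right one. But the heart of the statement --- that $[f_i]$ is norming for $E$ --- is never actually proved: your last paragraph is an acknowledged plan, and both of its mechanisms break down. Damping factors cannot work: if $z = \sum_{i\in F} b_i z_i \in B_Z$ is a finite combination norming $y$, the correction $h = \sum_{i\in F} b_i g_i$ is only controlled through $\sum_{i\in F} |b_i|\,\|g_i\|$, and for an $M$-basis the quantity $\sum_{i\in F}|b_i|$ is not bounded over $B_Z$ (no basis-type inequality is available), so no fixed choice of damping factors $\epsilon_i$ bounds $\|h\|$. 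Symmetrically, when $\|w\|$ dominates you need some $h = \sum b_i g_i$ with $|h(w)| \gtrsim \|w\|$ whose companion $z = \sum b_i z_i$ (built from the \emph{same} coefficients) satisfies both $\|z\| \lesssim 1$ and $|z(y)|$ small relative to $\|w\|$; nothing in your construction produces such coefficients, and the equivalence $\|f\|\sim\|z\|+\|h\|$ only says that a bad $z$ makes $\|f\|$ large --- it does not furnish a good one.

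The paper circumvents precisely this coupling obstacle, but by a qualitative rather than quantitative device. It takes the corrections $u_i$ normalized in $Y^{\perp}$, $w^*$-dense in $B_{Y^{\perp}}$, and --- this is the key --- with each value repeated infinitely many times, and sets $f_i = z_i + u_i$. Along a subsequence $\{i_k\}$ with $u_{i_k} = u_j$ fixed, the vectors $z_{i_k}$ lie in a multiple of $B_Z$, which is $w^*$-sequentially compact since $Z$ is $w^*$-closed (Corollary \ref{main2}) and $E$ is separable; any $w^*$-cluster point annihilates every $x_m$ and hence vanishes by totality of $Y$ over $Z$, so $z_{i_k}\stackrel{w^*}{\longrightarrow} 0$ and $f_{i_k}\stackrel{w^*}{\longrightarrow} u_j$. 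Thus the coupled term is made small in the $w^*$ sense (pointwise at the given vector), not in norm. Consequently the $w^*$-sequential closure of ${\rm span}\,\{f_i\}_i$ contains every $u_j$, hence every $z_j = f_j - u_j$, hence (by the $w^*$-density of $\{u_j\}_j$ in $B_{Y^{\perp}}$ and linearity) all of $E^* = Y^{\perp}\oplus Z$. The final, indispensable ingredient --- absent from your proposal --- is Banach's theorem that for separable $E$ a $w^*$-sequentially dense subspace of $E^*$ is norming; this is what converts the density statement into the norming conclusion with no estimates at all. Without the repetition/weak-star-null idea and this theorem, your plan cannot be completed as described.
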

\begin{proof}
We can assume that $X = [x_i]$. The separability of $E$ yields the
existence of a normalized sequence $\{u_j\}_j$ in $X^{\perp}$
which is $w^*$-dense in $B_{X^{\perp}}$ and such that, for every
$j$, the vector $u_j$ appears infinitely many times in that
sequence. Let us write, for each $i\in \mathbb N$, $f_i = z_i +
u_i.$ It is clear that $\{x_i, f_i\}_i$ is a biorthogonal system
in $E$. Put $Z= [z_i]$ and let $N$ denote the $w^*$-sequential
closure of ${\rm span}\, \{f_i\}_i$. Fix $j\in \mathbb N$ and let
$\{i_k\}_k$ be a strictly increasing sequence of positive integers
such that $u_{i_k} = u_j$ for every $k\in \mathbb N$. As $E$ is
separable and, by Corollary \ref{main2}, $Z$ is $w^*$-closed, we
have that $B_Z$ is $w^*$-sequentially compact. Thus, we can assume
that $z_{i_k}\stackrel{\text{w}^*}{\longrightarrow} z$ for some
$z\in Z$. Since $\{x_i\}_i$ is an $M$-basis of $X$ and $X$ is
total over $Z$, we easily get $z=0$, hence
$z_{i_k}\stackrel{\text{w}^*}{\longrightarrow} 0$. Consequently,
$f_{i_k}\stackrel{\text{w}^*}{\longrightarrow} u_j$. In particular
$u_j\in N$, therefore $z_j = f_j -u_j\in N$ for every $j\in
\mathbb N$. So $Z\subset N$. Further, bearing in mind that the
sequence $\{u_j\}_j$ is $w^*$-dense in $B_{X^{\perp}}$, we have
$X^{\perp}\subset N$. Since, because of Corollary \ref{main2},
$E^* = X^{\perp}\oplus (Z_{\perp})^{\perp} = X^{\perp}\oplus Z$,
it follows that $E^* = N$. Therefore, ${\rm span}\, \{f_i\}_i$ is
$w^*$-sequentially dense in $E^*$. Taking into account that $E$ is
separable, according to a result by Banach (c.f. \cite[Annexe,
Th\'eor\`eme 2]{B}), we deduce that $[f_i]$ is norming for $E$.
\end{proof}

\end{document}